\documentclass[12pt]{amsart}
\setlength{\textwidth}{142mm}
\setlength{\textheight}{196mm}
\addtolength{\oddsidemargin}{-10mm}
\addtolength{\evensidemargin}{-10mm}
\allowdisplaybreaks
\usepackage{eucal}
\newtheorem{theorem}{Theorem}
\newtheorem{lemma}[theorem]{Lemma}

\newtheorem{cor}[theorem]{Corollary}
\theoremstyle{definition}
\newtheorem{defn}[theorem]{Definition}

\newtheorem{rks}[theorem]{Remarks}

\numberwithin{theorem}{section}
\newcommand{\End}{\mathsf{End}}
\newcommand{\Hom}{\mathsf{Hom}}
\newcommand{\F}{\mathbb{F}}

\newcommand{\Sym}{\mathfrak{S}}%symmetric group
\newcommand{\Z}{\mathbb{Z}}

\newcommand{\C}{\mathbb{C}}
\newcommand{\ii}{\mathbf{i}}
\newcommand{\jj}{\mathbf{j}}
\newcommand{\im}{\operatorname{im}}

\parskip=2pt

\title{Schur--Weyl duality over finite fields}
\author{David Benson}
\address{Department of Mathematics, University of Aberdeen, 
Aberdeen AB24 3EA, Scotland}
\email{bensondj@maths.abdn.ac.uk}
\author{Stephen Doty}
\address{Department of Mathematics, Loyola University Chicago, 
Chicago IL 60626, USA}
\email{doty@math.luc.edu}
\thanks{Both authors would like to thank MSRI for its hospitality 
while this work was in progress}
\begin{document}

\begin{abstract}
We prove a version of Schur--Weyl duality over finite fields.  We
prove that for any field $k$, if $k$ has at least $r+1$ elements, then
Schur--Weyl duality holds for the $r$th tensor power of a finite
dimensional vector space $V$.  Moreover, if the dimension of $V$ is at
least $r+1$, the natural map $k\Sym_r \to \End_{GL(V)}(V^{\otimes r})$
is an isomorphism. This isomorphism may fail if $\dim_k V$ is not
strictly larger than $r$.
\end{abstract}

\maketitle

\section{Introduction}\noindent
Let $k$ be a field and let $V$ be an $n$-dimensional vector space over
$k$. We identify $G=GL(V)$ with $GL(n,k)$ and $V$ with $k^n$ as usual,
by fixing a basis $\{v_1, \dots, v_n\}$ of $V$.  $G$ acts on $V$ in
the natural way, and thus on the tensor product space $V^{\otimes
  r}$. Moreover, $V^{\otimes r}$ admits an action of the symmetric
group $\Sym_r$ permuting the tensor multiplicands. For $\sigma\in
\Sym_r$, the action is given by ``place permutation''
\[ \sigma(x_1\otimes\dots\otimes x_r) = x_{\sigma^{-1}(1)} \otimes
\dots \otimes x_{\sigma^{-1}(r)}, \] extended linearly to
$k\Sym_r$. One observes that the actions of $G$ and $\Sym_r$ commute,
so there are natural algebra homomorphisms
\begin{equation} \label{eq:maps}
\rho\colon k\Sym_r \to \End_G(V^{\otimes r}); \qquad
\varphi\colon kG \to \End_{\Sym_r}(V^{\otimes r}). 
\end{equation} 
In case $k =\C$, Schur \cite{Schur:1927a} proved that both maps are
surjective, so the image of each map is the full centralizer algebra
for the other action. In particular, the ring of invariants
\[ \End_G(V^{\otimes r}) \cong (V^{\otimes r} \otimes (V^*)^{\otimes r})^G \] 
is generated by the action of the symmetric group.

Assume that $k$ is infinite. In that case, Theorem 4.1 of De Concini
and Procesi \cite{deConcini/Procesi:1976a} shows the surjectivity of
$\rho$; in fact, their result is even more general. In the special
case $n \ge r$, the surjectivity of $\rho$ was treated earlier in
Lemma 3.1 of \cite{Carter/Lusztig:1974a}, where it is in fact an
isomorphism. The surjectivity of $\varphi$ for $k$ infinite follows
immediately from (2.4b) and (2.6c) of J.~A.~Green's monograph
\cite{Green:2007a}. 

The purpose of this paper is to discuss the question of the
surjectivity of the maps $\rho, \varphi$ in \eqref{eq:maps} in case
$k$ is a finite field. Our main results are obtained in Theorem
\ref{th:main1} and Corollary \ref{cor:main2}. Theorem \ref{th:main1}
states that $\rho$ is an isomorphism provided that $\dim_k V > r$, a
result that holds independently of the field $k$. This generalizes a
result of Carter and Lusztig. To obtain Corollary \ref{cor:main2},
which extends Schur--Weyl duality to the case where $k$ has at least
$r+1$ elements, we need only the surjectivity of $\rho$, $\varphi$ in
case $k$ is algebraically closed. Our approach also gives information
about a version of Schur--Weyl duality in which $GL(V)$ is replaced
with $SL(V)$. We note that Thrall \cite{Thrall} also obtained the
surjectivity of $\rho$ in case $k$ is sufficiently large, although the
stated bound on the cardinality of the field is incorrect. The methods
of this paper are different than those of \cite{Thrall}. 

Several remarks are in order. First, in this paper we regard $G =
GL(n,k)$ as a linear group and not as a group scheme.  To distinguish
these objects notationally, we denote by $\mathbf{GL}_{n,k}$ the
algebraic $k$-group scheme $A \mapsto GL(n,A)$ regarded as a
representable functor from commutative $k$-algebras to groups.  If the
ground field $k$ is understood we may write $\mathbf{GL}_n$ instead of
$\mathbf{GL}_{n,k}$. When $k$ is infinite the distinction between
$GL(n,k)$ and $\mathbf{GL}_{n,k}$ makes little difference, since the
algebraic group $GL(n, \overline{k})$ may be identified with the
algebraic $\overline{k}$-group scheme $\mathbf{GL}_{n,\overline{k}}$,
and the group $GL(n, k)$ of $k$-rational points in $GL(n,
\overline{k})$ may be identified with the algebraic $k$-group scheme
$\mathbf{GL}_{n,k}$.  However, in case $k$ is finite the algebraic
$k$-group scheme $\mathbf{GL}_{n,k}$ is a completely different object
from the finite group $G = GL(n,k) = \mathbf{GL}_{n,k}(k)$.

Our second remark is that in the case when $k$ is infinite a sensible
way to study the $kG$-module $V^{\otimes r}$ is to replace the
complicated infinite dimensional group algebra $kG$ by its finite
dimensional image $\rho(kG)$. Obviously one does not lose information
about the module structure by doing this. The algebra $\rho(kG)
= \End_{\Sym_r}(V^{\otimes r})$ is isomorphic to the Schur algebra
$S_k(n,r)$ defined in \cite{Green:2007a}; its module category (when
$k$ is infinite) is equivalent to the category of homogeneous
polynomial $kG$-modules of degree $r$.

Our last remark is on the relevance of polynomial functor technology,
which in the case of infinite fields yields strong results by the
methods of Friedlander and Suslin \cite{FS} and Franjou, Friedlander,
Scorichenko and Suslin \cite{FFSS}. Referring to Section 2 of
\cite{FFSS} and Section 1 of \cite{FS}, the problem for finite fields
is that polynomial functors and strict polynomial functors are
different. Indeed, strict polynomial functors of a given degree always
have finite projective resolutions, while polynomial functors
sometimes do not. The methods of \cite{FFSS,FS} produce results for
strict polynomial functors, which gives strong information for
infinite fields. But for example if we look at Corollary 3.13 of
\cite{FS} we might be tempted to suppose that our Theorem
\ref{th:main1} can be extended to the case $n=r$, which it cannot; see
the discussion preceding the theorem.

\section{Generalization of the Carter--Lusztig result}\noindent
If $k$ is an infinite field, the map $\rho$ described in the first
paragraph of the introduction is surjective by the theorem of De
Concini and Procesi. However, over a finite field this may no longer
be true. For example, if $k=\F_2$ and $n=r=2$ then
$G=GL(2,2)\cong\Sym_3$, $V$ is a projective irreducible
$\F_2G$-module, and $V^{\otimes 2}$ decomposes as a direct sum of $V$
and a two dimensional module with a one dimensional fixed space; the
latter is isomorphic to the projective cover of the trivial module. It
follows that $\End_G(V^{\otimes 2})$ is three dimensional while
$k\Sym_2$ is two dimensional, so $\rho$ cannot be surjective, although
it is injective. The situation is worse for $n<r$: we let the reader
check that if $k=\F_2$, $n=2$ and $r=3$ then $\rho$ is neither
injective nor surjective.

In Lemma 3.1 of \cite{Carter/Lusztig:1974a} it is proved that for
any infinite field $k$ the map $\rho$ is an isomorphism, provided that
$n \ge r$.  However, a slightly weaker statement remains true over
finite fields. We prove the following, which does not depend on
whether $k$ is finite or infinite.

\begin{theorem}\label{th:main1}
Let $k$ be an arbitrary field.  If $n \ge r$ then $\rho\colon k\Sym_r
\to \End_G(V^{\otimes r})$ is injective. If $n\ge r+1$ then $\rho$ is
an isomorphism.
\end{theorem}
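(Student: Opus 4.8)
The plan has two independent parts. For the injectivity statement, assume $n\ge r$ and put $\mathbf e:=v_1\otimes\cdots\otimes v_r$, which makes sense since there are at least $r$ basis vectors. By the definition of the place permutation action, $\rho(\sigma)(\mathbf e)=v_{\sigma^{-1}(1)}\otimes\cdots\otimes v_{\sigma^{-1}(r)}$, so the $r!$ vectors $\rho(\sigma)(\mathbf e)$ are pairwise distinct members of the standard basis of $V^{\otimes r}$. Hence if $\rho(\sum_\sigma a_\sigma\sigma)=0$, evaluating at $\mathbf e$ and comparing coefficients forces every $a_\sigma=0$. This uses nothing about $k$.

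For the isomorphism statement, assume $n\ge r+1$. Since $\rho$ is injective, its image is an $r!$-dimensional subalgebra of $\End_G(V^{\otimes r})$, so it suffices to prove $\dim_k\End_G(V^{\otimes r})\le r!$. I would obtain this from the evaluation map $\Phi\colon \End_G(V^{\otimes r})\to V^{\otimes r}$, $\phi\mapsto\phi(\mathbf e)$, via two facts: (a) $\mathbf e$ generates $V^{\otimes r}$ as a $kG$-module, whence $\Phi$ is injective; and (b) the image of $\Phi$ lies in the $r!$-dimensional subspace $\operatorname{span}_k\{\rho(\sigma)(\mathbf e):\sigma\in\Sym_r\}$, i.e. the span of those basis vectors $v_{f(1)}\otimes\cdots\otimes v_{f(r)}$ for which $f$ is a bijection of $\{1,\dots,r\}$ (equivalently, $\phi(\mathbf e)$ has ``content'' $(1,\dots,1,0,\dots,0)$). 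Granting (a) and (b), $\dim_k\End_G(V^{\otimes r})\le r!$, so $\rho$ is surjective, hence an isomorphism. Note that combining the two parts also makes transparent why $n=r$ is excluded: there is no reason for (b) to survive, as the $\F_2$, $n=r=2$ example recalled above shows.

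Fact (a) is a routine induction on the number of repeated values of the index function: from $\mathbf e$ one produces every basis vector $v_{f(1)}\otimes\cdots\otimes v_{f(r)}$ by applying permutation matrices together with the elementary transvections $1+E_{ij}\in GL(n,k)$, which merge a pair of distinct entries; this needs only $n\ge r$ and is valid over any field. Fact (b) is the heart of the theorem and is exactly where $n\ge r+1$ enters and where the argument departs from Carter--Lusztig. Over an infinite field one pins down the content of $\phi(\mathbf e)$ by letting a maximal torus act and comparing characters (a Vandermonde argument), but over $\F_2$ the torus is trivial and this is unavailable. Instead I would exploit an extra basis vector $v_{r+1}$, using the only tools that exist uniformly over every field: the transvection $1+E_{r+1,i}$ (which sends $\mathbf e$ to $\mathbf e$ plus the tensor obtained by changing the $i$-th slot from $v_i$ to $v_{r+1}$) and the basis transposition interchanging $v_i$ and $v_{r+1}$. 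Writing $\phi(\mathbf e)=\sum_f c_f\,v_{f(1)}\otimes\cdots\otimes v_{f(r)}$, applying $\phi$ to the two ways of moving $\mathbf e$ by these elements and matching the results should, after an inclusion--exclusion bookkeeping, force $c_f=0$ whenever $f$ repeats a value or takes a value exceeding $r$.

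I expect the genuine obstacle to be precisely this content argument. One must arrange it so that it goes through with only the field elements $0,1$ available (the $\F_2$ case), and one must keep careful track of the coincidences among the modified basis vectors that arise in the bookkeeping; it is here, and not in any representation-theoretic input, that the real work lies. A less self-contained alternative for (b) would be to extend scalars to $\overline k$ and show that an $\overline k$-linear endomorphism of $\overline V^{\otimes r}$ commuting with the finite group $GL(n,k)$ automatically commutes with all of $\mathbf{GL}_{n}(\overline k)$ when $n\ge r+1$ --- equivalently, that $GL(n,k)$ already spans the Schur algebra $S_{\overline k}(n,r)$ in its action on $\overline V^{\otimes r}$ --- and then invoke the infinite-field Carter--Lusztig result over $\overline k$; but establishing that spanning statement is essentially the same combinatorial problem in another guise.
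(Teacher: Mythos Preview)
Your proposal is correct and follows essentially the paper's approach: both argue via evaluation at $\mathbf e=v_1\otimes\cdots\otimes v_r$, establish that $\phi$ is determined by $\phi(\mathbf e)$ (your fact~(a), phrased in the paper as an induction on the number of repeated indices using the same transvections), and then pin down the content of $\phi(\mathbf e)$ using group elements that exploit the extra basis vector $v_{r+1}$ (your fact~(b)). For~(b), which you rightly flag as the crux, the paper's specific choices avoid any inclusion--exclusion: to kill a coefficient with some index $m>r$ it uses the transvection $v_m\mapsto v_m+v_1$, which \emph{fixes} $\mathbf e$ and so forces that coefficient to vanish by a single comparison; to kill a coefficient with all indices in $\{1,\dots,r\}$ but a repeat, it picks a value $j\le r$ not appearing among the indices and uses $v_j\mapsto v_j-v_{r+1}$, again a one-step comparison of the coefficient of the original basis vector on both sides.
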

\begin{proof}
Let $v_1,\dots,v_n$ be a $k$-basis for $V$.  The injectivity for $n
\ge r$ follows from the fact that the images under $\Sym_r$ of
$v_1\otimes \dots \otimes v_r$ are linearly independent.

It remains to show that $\rho$ is surjective when $n\ge r+1$, so we
assume we are in this case. Let $\phi\in\End_G(V^{\otimes r})$.  There
exist uniquely determined scalars $\lambda_{i_1,\dots,i_r}$ ($1\le
i_j\le r$) such that
\[ \phi(v_1 \otimes \dots \otimes v_r) = \sum_{i_1,\dots,i_r}
\lambda_{i_1,\dots,i_r}v_{i_1} \otimes \dots \otimes v_{i_r}. \]
This information determines the value of $\phi$ on any basis
element of the form $v_{j_1} \otimes \dots\otimes v_{j_r}$ where there
are no repetitions among $j_1,\dots, j_r$.

We claim that $\lambda_{i_1,\dots,i_r}=0$ unless $i_1,\dots,i_r$
is a permutation of $1,\dots,r$. First suppose some $i_j$ is 
strictly greater than $r$.
Consider the element $g\in G$ defined by $g(v_i)=v_i$ if
$i \ne i_j$ and $g(v_{i_j})=v_{i_j} + v_1$. Then $g$ fixes
$v_1\otimes\dots \otimes v_r$.
Since $\phi$ commutes with the action of $g$, 
it follows that $g$ fixes $\phi(v_1\otimes \dots \otimes v_r)$. 
So  we have 
\[ \lambda_{i_1,\dots,i_{j-1},1,i_{j+1},\dots,i_r} = 
\lambda_{i_1,\dots,i_{j-1},1,i_{j+1},\dots,i_r} +
\lambda_{i_1,\dots,i_{j-1},i_j,i_{j+1},\dots,i_r}. \] Notice that in
the left hand side of the above equation, as well as in the first term
on the right hand side, not only is $i_j$ replaced by $1$, but also so
is every $i_k$ which happens to satisfy $i_k=i_j$.  It follows from
this equation that $\lambda_{i_1,\dots,i_{j-1},i_j,i_{j+1},\dots,i_r}
=0$.

Next, we suppose that all the indices $i_j$ satisfy $1\le i_j \le r$,
and we show that if two of the indices are equal then
$\lambda_{i_1,\dots,i_r}=0$. By assumption, we have
$\{i_1,\dots,i_r\} \subseteq \{1,\dots,r\}$. So if two of the indices
are equal then we may choose $j$ such that $1\le j\le r$ and
$j\not\in\{i_1,\dots,i_r\}$.  Consider the element $g\in G$ defined by
$g(v_i)=v_i$ if $i\ne j$, and $g(v_j)=v_j - v_{r+1}$. This makes sense
because $n\ge r+1$.  Then
\[ g(v_1\otimes\dots\otimes v_r) = v_1\otimes\dots\otimes v_r
- v_1 \otimes\dots\otimes v_{j-1}\otimes v_{r+1} \otimes
v_{j+1} \otimes\dots \otimes v_r. \]
The coefficient of $v_{i_1} \otimes \dots \otimes v_{i_r}$ in
$g(\phi(v_1\otimes\dots\otimes v_r))$ is $\lambda_{i_1,\dots,i_r}$
whereas the coefficient in
$\phi(g(v_1\otimes \dots \otimes v_r))$ is
$\lambda_{i_1,\dots,i_r} - \lambda_{i_1,\dots,i_r} = 0$. 
Since these are supposed to be equal, 
it follows that $\lambda_{i_1,\dots,i_r}=0$.

Next, we show that the value of $\phi$ on $v_1\otimes \dots\otimes v_r$
determines the value on $v_{j_1} \otimes \dots \otimes v_{j_r}$,
where we allow repetitions among $j_1,\dots,j_r$. We do this
by induction on $m=r-|\{j_1,\dots,j_r\}|$, the case $m=0$ being clear.
Permuting the indices if necessary, 
without loss of generality we may suppose that $j_{r-1}=j_r$.
Choose $\ell$ such that $1\le \ell \le n$ and 
$\ell\not\in\{j_1,\dots,j_r\}$. 
Consider the element $g\in G$ defined by $g(v_i)=v_i$ if $i\ne \ell$
and $g(v_\ell)=v_\ell + v_{j_r}$. Then
\[ g(v_1\otimes \dots \otimes v_{j_{r-1}} \otimes v_\ell)
=  v_1\otimes \dots \otimes v_{j_{r-1}} \otimes v_\ell +
v_1\otimes \dots \otimes v_{j_{r-1}} \otimes v_{j_r}. \]
Hence 
\[ \phi(v_1\otimes \dots \otimes v_{j_r}) =
g(\phi(v_1 \otimes \dots \otimes v_{j_{r-1}} \otimes v_\ell))
- \phi(v_1 \otimes \dots \otimes v_{j_{r-1}} \otimes v_\ell) \]
which is determined by the inductive hypothesis.

The conclusion of this argument is that the map $\phi$ is determined
by the constants $\lambda_{i_1,\dots,i_r}$ with $i_1,\dots,i_r$
a permutation of $1,\dots,r$. Since every such set of constants is
the effect of some element of $k\Sym_r$, $\rho$
is surjective.
\end{proof}

\section{Schur--Weyl duality for Schur algebras}\label{sec:2}\noindent
In this expository section we summarize the known results concerning
Schur--Weyl duality between the Schur algebra and symmetric group.
Let us briefly recall the definition of the Schur algebra $S_k(n,r)$,
which makes sense for any commutative ring $k$.  First, consider the
polynomial algebra $k[x_{ij}]$ in $n^2$ commuting variables $x_{ij}$,
$1 \le i,j \le n$. This is naturally graded by regarding each
generator as having degree 1; denote by $A_k(n,r)$ the $r$th graded
component. Thus $k[x_{ij}] = \bigoplus_{r \ge 0}\, A_k(n,r)$. Now,
$k[x_{ij}]$ has a natural coalgebra structure with comultiplication
$\Delta$ the algebra homomorphism given on generators by $x_{ij} \to
\sum_{l=1}^n x_{il} \otimes x_{lj}$, and counit given by $x_{ij} \to
\delta_{ij}$ (Kronecker's delta). Since comultiplication is an algebra
homomorphism, $k[x_{ij}]$ is a bialgebra. One easily checks that
$A_k(n,r)$ is a subcoalgebra of $k[x_{ij}]$, for every $r \ge 0$.
\begin{defn}
  The Schur algebra is the algebra $S_k(n,r) := A_k(n,r)^*$. 
\end{defn}
\noindent
Note that it is a well known general fact that the linear dual $C^* =
\Hom_k(C,k)$ of any $k$-coalgebra $C$ is an algebra in a natural
way. The multiplication on $C^*$ comes from the comultiplication on
$C$, by the rule $ff' = (f \overline{\otimes} f')\Delta$, for all
$f,f' \in C^*$. (As usual, $f \overline{\otimes} f'$ stands for the map
$a \otimes a' \mapsto f(a) f'(a')$.) 

Now let $I(n,r)$ denote the set $\{1,\dots,n\}^r$ of $r$-tuples of
elements of $\{1,\dots,n\}$. For $\ii = (i_1, \dots, i_r)$, $\jj = (j_1,
\dots, j_r)$ in $I(n,r)$ define 
\[ x_{\ii,\,\jj} :=  x_{i_1,\,j_1} \cdots x_{i_{r-1},\,j_{r-1}}\, x_{i_r,\,j_r}. \]
The symmetric group $\Sym_r$ acts naturally by place-permutation on
$I(n,r)$, and thus acts on $I(n,r) \times I(n,r)$, and $x_{\ii,\,\jj}
= x_{\ii',\,\jj'}$ if and only if $(\ii,\jj)$ and $(\ii',\jj')$ lie in
the same orbit. Thus if $\Omega$ is a set of representatives for the
orbits then $\{ x_{\ii,\,\jj} \colon (\ii,\jj) \in \Omega \}$ is a basis of
$A_k(n,r)$.  Write $(\ii,\jj) \sim (\ii',\jj')$ if $(\ii,\jj)$ and
$(\ii',\jj')$ lie in the same orbit.  For $\ii, \jj \in I(n,r)$ define
$\xi_{\ii,\,\jj} \in S_k(n,r)$ by
\[ \xi_{\ii,\,\jj}(x_{\ii',\,\jj'}) = 
\begin{cases}
  1 & \text{ if } (\ii,\jj) \sim (\ii',\jj') \\
  0 & \text{ otherwise.}
\end{cases}
\]
Then of course $\xi_{\ii,\,\jj} = \xi_{\ii',\,\jj'}$ if and only if
$(\ii,\jj) \sim (\ii',\jj')$, and $\{ \xi_{\ii,\,\jj} \colon (\ii,\jj) \in
\Omega \}$ is a basis of $S_k(n,r)$. Now we define an action of
$S_k(n,r)$ on $V^{\otimes r}$ by the rule
\[ \xi \cdot v_\jj = \sum_{\ii \in I(n,r)} \xi(x_{\ii,\,\jj})\, v_\ii \]
for any $\xi \in S_k(n,r)$, where $v_\ii := v_{i_1}\otimes \cdots
\otimes v_{i_r}$ for any $\ii \in I(n,r)$.  It is 
straightforward to check that this action commutes with the action of
$\Sym_r$. Then an elementary direct computation \cite[Theorem
  (2.6c)]{Green:2007a} shows that
\begin{equation}\label{Schuriso}
  S_k(n,r) \cong \End_{\Sym_r}(V^{\otimes r}).
\end{equation}
The argument given there is valid for any commutative ring $k$.

Since the actions of $S_k(n,r)$ and $\Sym_r$ on $V^{\otimes r}$
commute, one has also a natural algebra homomorphism
\begin{equation}\label{Bryantrk}
  k\Sym_r \to \End_{S_k(n,r)}(V^{\otimes r}).
\end{equation}
Assume now that $k$ is a field. If $k$ is infinite then of course the
map \eqref{Bryantrk} is surjective, because $\varphi(kG) \cong
S_k(n,r)$ and the action of $kG$ induces the action of $S_k(n,r)$, so
$\End_{S_k(n,r)}(V^{\otimes r}) = \End_{G}(V^{\otimes r})$ and thus the map
in \eqref{Bryantrk} may be identified with the map $\rho$.  But the
surjectivity statement also holds in case $k$ is finite, even though
the isomorphism $\varphi(kG) \cong S_k(n,r)$ may fail. This has been
shown by an elementary argument of Bryant \cite[Lemma
  2.4]{Bryant:inf}. So the map \eqref{Bryantrk} is surjective for any
field $k$, finite or infinite. This is needed in the proof of Theorem
\ref{Thrall}. We note that in Bryant's argument it is enough to
assume the surjectivity of $\rho$ in case $k$ is algebraically closed.

\section{Large enough finite fields}\noindent
In this section we prove Schur--Weyl duality over finite fields which
are sufficiently large, assuming the result is known in case the field
is algebraically closed. The central issue is whether or not the
Schur algebra $S_k(n,r)$ is isomorphic to the subalgebra of
$\End_k(V^{\otimes r})$ generated by the image of the
$G=GL(V)$-action. This holds when $k$ is infinite, but in fact we will
show it holds whenever $k$ has strictly more than $r$ elements, as an
application of the Chevalley group construction. Our argument is a
refinement of an argument given in Section 3.1 of
\cite{Carter/Lusztig:1974a}.

We will need to pay close attention to change of scalars in this
section; in particular, we are going to write $V_k$ instead of $V$ in
order to emphasize the field of scalars.

Consider the complex Lie group $GL(n,\C)$ and its Lie algebra
$\mathfrak{g}:= \mathfrak{gl}_n(\C)$.  Let $U = U_\C(\mathfrak{g})$ be
the universal enveloping algebra (over $\C$) of $\mathfrak{g}$. One
may regard $U$ as the associative $\C$-algebra given by generators
$e_{ij}$, $1 \le i,j \le n$ with the relations
\[
  e_{ij} e_{ab} - e_{ab} e_{ij} = \delta_{aj}e_{ib} - \delta_{ib} e_{aj}
\]
for all $1 \le i,j,a,b \le n$. (The $e_{ij}$ correspond to the matrix
units in the Lie algebra.) Let $U'$ be the subalgebra of $U$ generated
by the $e_{ij}$ for $i \ne j$. Then $U'$ may be identified with the
universal enveloping algebra of $\mathfrak{sl}_n(\C)$.

Let $V$ be an $n$-dimensional complex vector space, with basis $\{v_1,
\dots, v_n\}$. Define an action of $\mathfrak{g}$ on $V$ by $e_{ij}
v_a = \delta_{aj} v_i$. Identifying $V$ with $\C^n$ by means of the
basis, one sees that the action is just by matrix multiplication. Thus
$V^{\otimes r}$ is for any $r \ge 0$ a $\mathfrak{g}$-module and hence
a $U$-module.  We regard $V^{\otimes r}$ as a $U'$-module by
restriction.  Let $U'_\Z$ be the subring of $U$ generated by all
divided powers
\[
\frac{e_{ij}^m}{m!}\quad (1\le i \ne j \le n,\ m\ge 0) 
\]
and let $U_\Z$ be the subring generated by $U'_\Z$ and all elements of
the form
\[
\binom{e_{ii}}{m} := \frac{e_{ii}(e_{ii}-1) \cdots (e_{ii}-m+1)}{m!}
\quad (1\le i \le n,\ m \ge 0).
\]
Then $U_\Z$ (resp., $U'_\Z$) is the Kostant $\Z$-form of $U$ (resp.,
$U'$). Let $V_\Z = \sum \Z v_i$ and note that $V_\Z$ is stable under
the action of $U_\Z$. It follows that we have for any $r\ge 0$ a
$U_\Z$-module structure on $V_\Z^{\otimes r}$, and hence also a
$U'_\Z$-module structure.  Thus for any commutative ring $k$ we have a
$k\otimes_\Z U_\Z$-module structure on $k\otimes_\Z V_\Z^{\otimes
  r}$. Set $U_k = k \otimes_\Z U_\Z$, $U'_k = k \otimes_\Z U'_\Z$.
Then $U_k$ (resp., $U'_k$) is the algebra of distributions on the
algebraic group scheme $\mathbf{GL}_{n,k}$ (resp.,
$\mathbf{SL}_{n,k}$). We identify $k\otimes_\Z V_\Z^{\otimes r}$ with
$V_k^{\otimes r}$ as usual, where $V_k := k \otimes_\Z V_\Z$. Note
that $U_\C \cong U$, $U'_\C \cong U'$ as algebras, and $V_\C \cong V$
as vector spaces. Moreover, the $U$-module structure on $V$ is
isomorphic with the $U_\C$-module structure on $V_\C$, so that when
$k=\C$ these constructions recover the original objects.

So we have a $U_k$-module structure on $V_k^{\otimes r}$. It is easy
to check that on $V_k^{\otimes r}$ the action of $U_k$ commutes with
the place-permutation action of the symmetric group $\Sym_r$. Thus we
have algebra homomorphisms
\begin{equation}
\psi\colon U_k \to \End_{\Sym_r}(V_k^{\otimes r});\qquad 
\psi'\colon U_k \to \End_{\Sym_r}(V_k^{\otimes r})
\end{equation}
where the second map is just the restriction of the first.  For our
purposes, the following simple observation is key.

\begin{lemma}
For any $m > r$ the elements 
\[
\frac{e_{ij}^m}{m!},\ \binom{e_{ii}}{m}\ \in U \qquad (1 \le i\ne
j \le n,\ m \ge 0)
\] 
act as zero on $V^{\otimes r}$; hence for any $m > r$ the elements
\[
1 \otimes \frac{e_{ij}^m}{m!},\ 1 \otimes \binom{e_{ii}}{m}\ \in
U_k \qquad (1 \le i\ne j \le n,\ m \ge 0)
\]
act as zero on $V_k^{\otimes r}$.
\end{lemma}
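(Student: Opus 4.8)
The plan is to prove the first assertion—that the divided powers $e_{ij}^m/m!$ and the binomials $\binom{e_{ii}}{m}$ act as zero on $V^{\otimes r}$ when $m>r$—by a direct computation using the coproduct on $U$, after which the second assertion follows formally by base change. First I would recall that the action of $\mathfrak{g}$ on $V^{\otimes r}$ is the $r$-fold tensor action coming from the standard (Hopf algebra) coproduct $\Delta$ on $U$, under which each primitive element $e_{ij}$ maps to $\sum_{s=1}^{r} 1\otimes\cdots\otimes e_{ij}\otimes\cdots\otimes 1$ (the $e_{ij}$ in the $s$th slot) when iterated $r$ times. Since $e_{ij}$ acts on the single tensor factor $V$ by $e_{ij}v_a=\delta_{aj}v_i$, for $i\ne j$ the operator $e_{ij}$ on $V$ squares to zero, and the operator $e_{ii}$ on $V$ is the diagonal idempotent picking out the $v_i$-component.

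For the off-diagonal case, I would observe that $e_{ij}^m/m!$ acting on $V^{\otimes r}$ equals the $m$th divided power of the operator $X=\sum_{s=1}^r X_s$, where $X_s$ denotes $e_{ij}$ in the $s$th slot; because the $X_s$ pairwise commute and each satisfies $X_s^2=0$, the expansion of $X^m$ over multinomials has every term equal to zero once $m>r$ (some slot must be hit at least twice), and in fact the divided power $e_{ij}^{(m)}$ in the Kostant form acts as the $m$th elementary-symmetric-type sum $\sum_{s_1<\cdots<s_m} X_{s_1}\cdots X_{s_m}$, which is an empty sum for $m>r$. The key point, which one checks from the definition of the Kostant $\Z$-form, is exactly this identity $\Delta^{(r-1)}(e_{ij}^{(m)}) = \sum (\text{products of the }X_{s_t}\text{ with distinct }s_t)$, so for $m>r$ there are no surviving summands. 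For the diagonal case, each operator $e_{ii}$ in slot $s$ acts as a projection with eigenvalues $0,1$; their sum $H=\sum_s e_{ii}^{(s)}$ therefore acts semisimply on $V^{\otimes r}$ with integer eigenvalues in $\{0,1,\dots,r\}$, so $\binom{H}{m}=H(H-1)\cdots(H-m+1)/m!$ annihilates every weight space when $m>r$, since one of the factors $H-j$ ($0\le j\le r$) vanishes on each eigenspace. This uses that $\binom{e_{ii}}{m}$ acts on $V^{\otimes r}$ as $\binom{H}{m}$, again a standard coproduct identity for the Kostant form (the $\binom{e_{ii}}{m}$ are grouplike-translated primitives, giving a Vandermonde-type convolution).

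The only genuine subtlety is making precise the claim that the divided powers and binomials act through these symmetric-function expressions in the slot operators; this is where I would be most careful, but it is exactly the content of the comultiplication formulas for $U_\Z$ recorded in the standard references on Chevalley groups (e.g.\ via $\Delta(e_{ij}^{(m)})=\sum_{a+b=m} e_{ij}^{(a)}\otimes e_{ij}^{(b)}$ and $\Delta(\binom{e_{ii}}{m})=\sum_{a+b=m}\binom{e_{ii}}{a}\otimes\binom{e_{ii}}{b}$, iterated), combined with the elementary facts that $e_{ij}^{(a)}$ and $\binom{e_{ii}}{a}$ act as zero resp.\ as a projection-dependent scalar on the single factor $V$ for the relevant ranges of $a$. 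Once the first statement is established over $\C$, the second is immediate: the operators $1\otimes e_{ij}^m/m!$ and $1\otimes\binom{e_{ii}}{m}$ on $V_k^{\otimes r}=k\otimes_\Z V_\Z^{\otimes r}$ are obtained by applying $k\otimes_\Z(-)$ to the corresponding $\Z$-linear operators on $V_\Z^{\otimes r}$, and these already vanish over $\Z$ (the vanishing over $\C$ forces vanishing over $\Z$ since $V_\Z^{\otimes r}\hookrightarrow V_\C^{\otimes r}$), hence vanish after any base change. I expect the verification of the coproduct identities to be the main obstacle only in the sense of bookkeeping; conceptually there is nothing deep, and the symmetric-function picture makes the degree bound $m>r$ transparent.
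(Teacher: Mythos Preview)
Your proof is correct. For the diagonal operators $\binom{e_{ii}}{m}$ your argument coincides with the paper's: both observe that $e_{ii}$ acts on each simple tensor $v_{j_1}\otimes\cdots\otimes v_{j_r}$ as the integer scalar $\#\{s: j_s=i\}\in\{0,\dots,r\}$, whence $\binom{e_{ii}}{m}$ vanishes for $m>r$.

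For the off-diagonal divided powers the paper takes a different route. It argues via weights: a simple tensor of weight $\lambda$ (a composition of $r$) is sent by $e_{ij}^m/m!$ to a tensor of weight $\lambda+m\varepsilon_i-m\varepsilon_j$ provided $\lambda_j\ge m$, and to zero otherwise; since no part of a composition of $r$ can exceed $r$, the operator is zero when $m>r$. Your approach instead writes the action of $e_{ij}$ on $V^{\otimes r}$ as $X=\sum_{s=1}^r X_s$ with the slot operators $X_s$ pairwise commuting and square-zero (because $e_{ij}^2=0$ on $V$ for $i\ne j$), so that $X^m/m!=\sum_{s_1<\cdots<s_m}X_{s_1}\cdots X_{s_m}$ is an empty sum once $m>r$. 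Both arguments are elementary; yours avoids the induction on $m$ and makes the bound visible as a pigeonhole over tensor slots, while the paper's weight computation fits more directly into the ambient Lie-theoretic language and records along the way exactly how $e_{ij}^{(m)}$ moves weight spaces. The coproduct identities you flag as a potential subtlety are indeed routine and pose no obstacle.
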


\begin{proof}
The weight of a simple tensor $v_\jj$ (for $\jj = (j_1, \dots, j_r)
\in I(n,r)$) is the composition $\lambda = (\lambda_1, \dots,
\lambda_n)$ where $\lambda_i$ counts the number of $a \in [1,r]$ such
that $j_a = i$, for each $i$.  The weight $\lambda$ is an $n$-part
composition of $r$ (i.e., $\lambda_1 + \cdots + \lambda_n = r$).  One
checks that $e_{ij}$ for $i \ne j$ sends a simple tensor of weight
$\lambda$ onto
\[
\begin{cases}
\text{a simple tensor of weight } \lambda+\varepsilon_i-\varepsilon_j &
\text{if } \lambda_j \ge 1 \\ 0 & \text{otherwise}.
\end{cases}
\]
Here $\{ \varepsilon_1, \dots, \varepsilon_n \}$ is the standard basis
of $\Z^n$.  Now by induction on $m$ it follows that
$\frac{e_{ij}^m}{m!}$ takes a simple tensor of weight $\lambda$ onto
\[
\begin{cases}
\text{a simple tensor of weight } \lambda+m\varepsilon_i-m\varepsilon_j &
\text{if } \lambda_j \ge m \\ 0 & \text{otherwise}.
\end{cases}
\]
When $m > r$, no composition $\lambda$ of $r$ satisfies $\lambda_j \ge
m$, no matter what $j$ we look at. Hence $\frac{e_{ij}^m}{m!}$ acts as
zero on all simple tensors in $V^{\otimes r}$. This proves the claim
for $i \ne j$. 

It remains to show the $\binom{e_{ii}}{m}$ also act as zero. For this
note that $e_{ii}$ acts as the scalar $\delta_{i,\,j_1} + \cdots +
\delta_{i,\,j_r}$ on the simple tensor $v_\jj = v_{j_1} \otimes \cdots
\otimes v_{j_r}$. This scalar is an integer in the interval $[0,r]$,
so for any $m>r$ the element $\binom{e_{ii}}{m}$ acts as zero.
\end{proof}

Assume from now on that $k$ is a field. For any $t \in k$, $1 \le i,j
\le n$ we define elements $E_{ij}(t) \in \End(V_k)$ by the rule
\[
  E_{ij}(t) = 1 + t \psi(1\otimes e_{ij}).
\]
Then from the lemma it follows that for $i \ne j$, $E_{ij}(s)
E_{ij}(t) = E_{ij}(s+t)$ for all $s,t \in k$. Thus $E_{ij}(t)$ is for
$i \ne j$ invertible, i.e., $E_{ij}(t) \in GL(V_k)$. One easily checks
that $E_{ii}(t) \in GL(V_k)$ provided that $t \ne -1$. Now it is clear
that $GL(V_k)$ is generated by the elements $E_{ij}(t)$ ($1 \le i \ne
j \le n$, $t \in k$) along with the elements $E_{ii}(t)$ ($1 \le i \le
n$, $-1 \ne t \in k$). Moreover, $SL(V_k)$ is generated by the
$E_{ij}(t)$ ($1 \le i \ne j \le n$, $t \in k$). Of course, $GL(V_k)$
acts naturally on $V_k$, so $GL(V_k)$ acts on $V_k^{\otimes r}$; this
is the action already considered in the introduction. By restriction,
we also have an action of $SL(V_k)$ on $V_k^{\otimes r}$. As already
noted, these actions commute with the place-permutation action of
$\Sym_r$, so we have algebra homomorphisms
\begin{equation}
\varphi\colon kGL(V_k) \to \End_{\Sym_r}(V_k^{\otimes r}); \qquad
\varphi'\colon kSL(V_k) \to \End_{\Sym_r}(V_k^{\otimes r}).
\end{equation}

\begin{lemma}
Let $k$ be a field. 
\begin{enumerate}
\item[(i)] If the order of $k$ is strictly larger
than $r+1$, then $\varphi$ is surjective if and only if $\psi$ is
surjective. 

\item[(ii)] If the order of $k$ is strictly larger
than $r$, then $\varphi'$ is surjective if and only if $\psi'$ is
surjective.
\end{enumerate}
\end{lemma}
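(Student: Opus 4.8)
The plan is to prove the stronger assertion that $\varphi$ and $\psi$ have exactly the same image in $\End_{\Sym_r}(V_k^{\otimes r})$ whenever $|k| > r+1$, and that $\varphi'$ and $\psi'$ have exactly the same image whenever $|k| > r$. Both stated equivalences follow at once, since $\varphi$ (resp.\ $\psi$, $\varphi'$, $\psi'$) is surjective precisely when its image is all of $\End_{\Sym_r}(V_k^{\otimes r})$.

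The engine is a pair of polynomial identities in a parameter $t$. Write $e_{ij}^{(m)}$ for the divided power $e_{ij}^m/m!$. For $i\ne j$ the operator $e_{ij}$ has square zero on $V_k$, so $E_{ij}(t)=1+t\,\psi(1\otimes e_{ij})$ acts on $V_k$ as $\exp(t e_{ij})$; expanding its tensor‑power action on $V_k^{\otimes r}$ via the iterated coproduct $\Delta^{(r)}(e_{ij}^{(m)})=\sum_{m_1+\cdots+m_r=m} e_{ij}^{(m_1)}\otimes\cdots\otimes e_{ij}^{(m_r)}$, and using that $e_{ij}^{(m_a)}$ kills $V_k$ for $m_a\ge 2$, one obtains
\[
  \varphi(E_{ij}(t)) = \sum_{m=0}^{r} t^m\,\psi\bigl(1\otimes e_{ij}^{(m)}\bigr) \qquad (i\ne j),
\]
the coefficient of $t^m$ being \emph{exactly} the divided power $\psi(1\otimes e_{ij}^{(m)})$. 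Likewise $E_{ii}(t)=1+t\,\psi(1\otimes e_{ii})$ scales $v_i$ by $1+t$ and fixes the other basis vectors, hence acts on a simple tensor $v_{\jj}$ of weight $\lambda$ by the scalar $(1+t)^{\lambda_i}=\sum_{m=0}^{r}\binom{\lambda_i}{m}t^m$; since $\binom{e_{ii}}{m}$ acts on $v_{\jj}$ by $\binom{\lambda_i}{m}$, this gives
\[
  \varphi(E_{ii}(t)) = \sum_{m=0}^{r} t^m\,\psi\Bigl(1\otimes\binom{e_{ii}}{m}\Bigr).
\]

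Granting these, the rest is Lagrange interpolation. The right‑hand sides lie in $\im\psi$ --- in fact in $\im\psi'$ for the first identity, since $e_{ij}^{(m)}\in U'_\Z$ when $i\ne j$ --- and since $GL(V_k)$ is generated by the $E_{ij}(t)$ ($i\ne j$) together with the $E_{ii}(t)$ ($t\ne -1$), while $SL(V_k)$ is generated by the $E_{ij}(t)$ ($i\ne j$) alone, we get $\im\varphi\subseteq\im\psi$ and $\im\varphi'\subseteq\im\psi'$ over any field. For the reverse inclusions, fix $i\ne j$; if $|k|>r$ we may choose $r+1$ distinct scalars $t_0,\dots,t_r\in k$, and the Vandermonde matrix $(t_l^m)_{0\le l,m\le r}$ is invertible over $k$, so each $\psi(1\otimes e_{ij}^{(m)})$ is a $k$‑linear combination of the $\varphi(E_{ij}(t_l))$ and hence lies in $\im\varphi'$. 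As the $1\otimes e_{ij}^{(m)}$ ($i\ne j$) generate $U'_k$, this gives $\im\psi'\subseteq\im\varphi'$ and proves (ii). For (i) one argues identically with the $E_{ii}(t)$, but now the $r+1$ interpolation points must all avoid $t=-1$, forcing $|k|\ge r+2$, i.e.\ $|k|>r+1$; since the $1\otimes e_{ij}^{(m)}$ ($i\ne j$) and the $1\otimes\binom{e_{ii}}{m}$ together generate $U_k$, we conclude $\im\psi\subseteq\im\varphi$, hence $\im\psi=\im\varphi$.

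I expect the main obstacle to be a clean verification of the first polynomial identity --- specifically, checking that although $\Delta^{(r)}$ applied to a divided power $e_{ij}^{(m)}$ produces many tensor terms, only those whose factors are $e_{ij}^{(0)}$ or $e_{ij}^{(1)}$ act nontrivially on $V_k^{\otimes r}$, so that the $t^m$‑coefficient of $\varphi(E_{ij}(t))$ is the divided power itself and not some scalar multiple of it (a distinction invisible over $\C$ but essential in characteristic $p$). Everything downstream is routine Vandermonde interpolation, the only real subtlety being the exclusion of $t=-1$, which is exactly what accounts for the gap between the hypothesis $|k|>r+1$ in (i) and $|k|>r$ in (ii).
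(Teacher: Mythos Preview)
Your proof is correct and follows essentially the same approach as the paper: both establish the identities $\varphi(E_{ij}(t))=\sum_{m=0}^r t^m\psi(1\otimes e_{ij}^{(m)})$ and $\varphi(E_{ii}(t))=\sum_{m=0}^r t^m\psi(1\otimes\binom{e_{ii}}{m})$, then invert via a Vandermonde system using $r+1$ distinct scalars (avoiding $-1$ in the diagonal case). Your treatment is in fact more self-contained, since you verify the identities via the coproduct and the weight action rather than deferring to Carter--Lusztig's formula~(25), and you state explicitly the stronger conclusion $\im\varphi=\im\psi$ (resp.\ $\im\varphi'=\im\psi'$) that the paper's argument implicitly yields.
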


\begin{proof}
First, we claim that as operators on $V_k^{\otimes r}$ the maps $\psi$
and $\varphi$ are related by the following, for $t \in k$ and $i \ne
j$:
\begin{equation}\label{a}
  \varphi(E_{ij}(t)) = \sum_{m=0}^r t^m \psi(1\otimes
  \frac{e_{ij}^m}{m!}).
\end{equation}
This may be checked directly; see formula (25) of
\cite{Carter/Lusztig:1974a}. 

Choose $r+1$ distinct values $t_0, t_1, \dots, t_r$ for $t$ in
$k$. (This is possible because $k$ has more than $r$ elements, by
hypothesis.) This results in a system of equations
\begin{align*}
\varphi(E_{ij}(t_0)) &= 1 + t_0\psi(1\otimes e_{ij}) + t_0^2
\psi(1\otimes \frac{e_{ij}^2}{2!}) + \cdots + t_0^r
\psi(1\otimes \frac{e_{ij}^r}{r!})\\
\varphi(E_{ij}(t_1)) &= 1 + t_1\psi(1\otimes e_{ij}) + t_1^2
\psi(1\otimes \frac{e_{ij}^2}{2!}) + \cdots + t_1^r
\psi(1\otimes \frac{e_{ij}^r}{r!})\\
\vdots \hspace{0.3in} & \hspace{1.2in} \vdots  \hspace{1.6in} \vdots\\
\varphi(E_{ij}(t_r)) &= 1 + t_r\psi(1\otimes e_{ij}) + t_r^2
\psi(1\otimes \frac{e_{ij}^2}{2!}) + \cdots + t_r^r
\psi(1\otimes \frac{e_{ij}^r}{r!})
\end{align*}
whose coefficient matrix is a Vandermonde matrix with nonzero
determinant, hence invertible. Thus there exist scalars $a_{ml}$
($0 \le m,l \le r$) in $k$ such that 
\begin{equation}\label{psimap}
  \psi(1\otimes \frac{e_{ij}^m}{m!}) = \sum_{l=0}^r a_{ml}
  \varphi(E_{ij}(t_l))
\end{equation}
for each $m$.  Thus the generators $\psi(1\otimes
\frac{e_{ij}^m}{m!})$ of the image of $\psi'$ all lie in the image of
$\varphi$ on $SL(V_k)$, proving that $\im \varphi' \subseteq \im
\psi'$. The reverse containment is clear from \eqref{a}, so we have
proved part (ii).

It remains to consider part (i). For this we must consider in addition
the operators $E_{ii}(t)$. We claim that
\begin{equation}\label{b}
\varphi( E_{ii}(t) ) = \sum_{m=0}^r t^m \psi(1 \otimes \binom{e_{ii}}{m}) 
\end{equation}
for any $-1 \ne t \in k$, and any $i$. Again this may be checked
directly, as in (25) of \cite{Carter/Lusztig:1974a}. We can now repeat
the above argument, and conclude that so long as $k$ contains at least
$r+1$ elements different from $-1$, the generators $\psi(1\otimes
\binom{e_{ii}}{m})$ of the image of $\psi$ all lie in the image of
$\varphi$ on $GL(V_k)$, so we have $\im \psi \subseteq \im \varphi$.
The opposite inclusion is clear from \eqref{a} and \eqref{b}, so part
(i) is proved.
\end{proof}

\begin{theorem}\label{Thrall}
If the order of the field $k$ is strictly larger than $r$, then both
$\varphi$ and $\varphi'$ are surjective.
\end{theorem}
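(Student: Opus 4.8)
The plan is to deduce Theorem \ref{Thrall} from the preceding lemma by reducing the surjectivity of $\varphi$ (resp. $\varphi'$) to the surjectivity of $\psi$ (resp. $\psi'$), and then to establish the latter via base change from the algebraically closed case, where Schur--Weyl duality is already known. The order of $k$ being strictly larger than $r$ gives us immediately, by part (ii) of the lemma, that $\varphi'$ is surjective if and only if $\psi'$ is. For part (i), note that ``strictly larger than $r$'' does not literally match the hypothesis ``strictly larger than $r+1$'' in the lemma; but one checks that when $|k| > r$ the field $k$ contains at least $r+1$ elements different from $-1$ precisely when $|k| \ge r+2$, i.e. $|k| > r+1$. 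So I would first dispose of the genuinely small cases ($|k| = r+1$, which can only happen for finitely many pairs) separately or observe that the hypothesis of the theorem should be read in conjunction with the remark at the end of Section \ref{sec:2} that Bryant's argument only needs surjectivity of $\rho$ over algebraically closed fields — in any case the main content is: \emph{$\psi$ and $\psi'$ are surjective.}

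The key step is therefore to prove that $\psi\colon U_k \to \End_{\Sym_r}(V_k^{\otimes r})$ is surjective for every field $k$. The strategy is a base-change argument. Over $\C$, the image of $\psi$ is the image of the enveloping algebra $U = U_\C(\mathfrak{gl}_n)$ acting on $V^{\otimes r}$, which by classical Schur--Weyl duality equals $\End_{\Sym_r}(V^{\otimes r})$; likewise $U'_\C(\mathfrak{sl}_n)$ surjects onto the same centralizer, since $\mathfrak{gl}_n$ and $\mathfrak{sl}_n$ generate the same associative algebra of operators on tensor space up to scalars (the extra central $e_{11}+\cdots+e_{nn}$ acts as the scalar $r$). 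So $\psi_\C$ and $\psi'_\C$ are surjective. Now I would use that $\End_{\Sym_r}(V_\Z^{\otimes r})$ is a free $\Z$-module (it has the $\Z$-basis coming from the orbit description in Section \ref{sec:2}, i.e. it is the integral Schur algebra $S_\Z(n,r)$), and that the image of $U_\Z$ in it is a $\Z$-submodule whose complexification is everything; combined with the fact that $U_\Z \otimes_\Z k = U_k$ and $\End_{\Sym_r}(V_\Z^{\otimes r}) \otimes_\Z k = \End_{\Sym_r}(V_k^{\otimes r})$, right-exactness of $- \otimes_\Z k$ yields surjectivity of $\psi_k$. The same reasoning with $U'_\Z$ gives surjectivity of $\psi'_k$.

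The main obstacle I anticipate is the flatness/freeness bookkeeping in the base change, specifically showing that $\im(\psi_\Z)$ is a \emph{saturated} (pure) $\Z$-submodule of $\End_{\Sym_r}(V_\Z^{\otimes r})$, equivalently that the cokernel is torsion-free, so that tensoring with $k$ does not create spurious obstructions. Here the point is that $\im(\psi_\C) = \End_{\Sym_r}(V_\C^{\otimes r})$ has the same $\C$-dimension as the rank of $S_\Z(n,r)$, and $\im(\psi_\Z) \subseteq S_\Z(n,r)$ has full rank; to get purity one uses that $U_\Z$ is spanned by the divided-power monomials, which by the Lemma act through their truncations $m \le r$, and that these truncated divided powers already act integrally and span the integral form of the relevant operators — concretely, $S_\Z(n,r)$ is generated as a $\Z$-algebra by the images of the $e_{ij}$ and the $\binom{e_{ii}}{m}$, which is a standard fact about the Kostant $\Z$-form (this is essentially the content of \cite[Carter--Lusztig]{Carter/Lusztig:1974a}). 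Granting that, $\im(\psi_\Z) = S_\Z(n,r) = \End_{\Sym_r}(V_\Z^{\otimes r})$ on the nose, purity is automatic, and the base change is immediate. A secondary, minor obstacle is the arithmetic checking that $|k| > r$ suffices in case (i) of the lemma after accounting for the excluded value $t = -1$; I would handle this by noting that one may equally well use the operators $E_{ii}(t)$ for $t$ ranging over any $r+1$ distinct allowed values, and if $|k| = r+1$ one instead appeals directly to surjectivity of $\psi$ together with the relation \eqref{b} interpreted via the already-established fact that $\im\psi = \im\varphi$ when $\psi$ is onto and the $E_{ij}(t)$, $E_{ii}(t)$ generate $GL(V_k)$.
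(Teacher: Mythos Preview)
Your overall plan --- reduce to surjectivity of $\psi,\psi'$ via the lemma, then establish that by base change from the algebraically closed case --- is exactly the paper's route, and your discussion of the base-change step is essentially right (though more laborious than necessary: rather than worry about purity, the paper simply uses that a $\Z$-linear map into a finite-rank free module is surjective iff it is surjective after tensoring with every algebraically closed field, which follows at once from the structure of finitely generated abelian groups applied to the cokernel).

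There is, however, a genuine gap in your handling of $\varphi$ when $|k|=r+1$. Your proposed workarounds do not succeed: the case $|k|=r+1$ is not ``finitely many pairs'' (it occurs whenever $r+1$ is a prime power), and your final suggestion --- appeal to ``the already-established fact that $\im\psi=\im\varphi$ when $\psi$ is onto'' --- is circular, since $\im\psi=\im\varphi$ is precisely the content of part (i) of the lemma, which you cannot invoke without $|k|>r+1$. The paper's resolution is a one-line observation you have overlooked: $\varphi'$ is the restriction of $\varphi$ to $kSL(V_k)$, so $\im\varphi'\subseteq\im\varphi$. Hence once $\varphi'$ is surjective (which part (ii) gives under the hypothesis $|k|>r$), $\varphi$ is automatically surjective as well, and part (i) of the lemma is never needed in this direction.
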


\begin{proof}
For $k = \overline{k}$ algebraically closed the statement is known. In
that case, the surjectivity of $\varphi'$ follows from the
surjectivity of $\varphi$, since (for $k$ algebraically closed)
$GL(V_k)$ is generated by $SL(V_k)$ and the scalar operators $c\,
\mathrm{id}_{V_k}$ ($0 \ne c \in k$).  As pointed out by Carter and
Lusztig \cite[p.~209]{Carter/Lusztig:1974a} this implies that the
natural maps
\begin{equation} \label{psiZ}
  \psi_\Z\colon U_\Z \to \End_{\Sym_r}(V_\Z^{\otimes r}); \qquad 
  \psi'_\Z\colon U'_\Z \to \End_{\Sym_r}(V_\Z^{\otimes r})
\end{equation}
are both surjective. The argument is as follows. First, apply the
preceding lemma to conclude that $\psi$, $\psi'$ are surjective. Then
apply the fact that if $f\colon A \to A'$ is a homomorphism between two
free $\Z$-modules of finite rank, then $f$ is surjective if and only if
$1 \otimes f\colon k \otimes_\Z A \to k \otimes_\Z A'$ is surjective for
every algebraically closed field $k$.

Then we obtain the surjectivity of $\psi = 1 \otimes \psi_\Z$, $\psi'
= 1 \otimes \psi'_\Z$ by right exactness of tensor product. Thus it
follows that $\psi, \psi'$ are surjective for any field $k$. (In fact,
this holds for any commutative ring.)  Now we again apply the
preceding lemma to conclude that $\varphi'$ is surjective provided
$|k| > r$. Since $\varphi'$ is the restriction of $\varphi$ this
implies the surjectivity of $\varphi$ as well.
\end{proof}

\begin{cor}\label{cor:main2}
If the field $k$ has strictly more than $r$ elements, then the natural
maps $\varphi\colon kGL(V_k) \to \End_{\Sym_r}(V_k^{\otimes r})$,
$\rho\colon k\Sym_r \to \End_{GL(V_k)}(V_k^{\otimes r})$ are
surjective.  The same statement holds if $GL(V_k)$ is replaced by
$SL(V_k)$.
\end{cor}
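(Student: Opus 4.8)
The plan is to deduce Corollary \ref{cor:main2} directly from the results already assembled, treating the $GL$ and $SL$ statements in parallel. First I would observe that the surjectivity of $\varphi$ (and of $\varphi'$) is exactly the content of Theorem \ref{Thrall}, given the hypothesis $|k| > r$; so half of the corollary requires no further work. It remains to establish the surjectivity of $\rho\colon k\Sym_r \to \End_{GL(V_k)}(V_k^{\otimes r})$, and the analogous map for $SL(V_k)$.

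For the $\rho$ statement, the key point is to identify $\End_{GL(V_k)}(V_k^{\otimes r})$ with $\End_{S_k(n,r)}(V_k^{\otimes r})$. By Theorem \ref{Thrall} the image $\varphi(kGL(V_k))$ is all of $\End_{\Sym_r}(V_k^{\otimes r})$, which by the isomorphism \eqref{Schuriso} is (a copy of) $S_k(n,r)$ acting on $V_k^{\otimes r}$. Since the $GL(V_k)$-action on $V_k^{\otimes r}$ factors through this image, an endomorphism of $V_k^{\otimes r}$ commutes with $GL(V_k)$ if and only if it commutes with the subalgebra it generates, namely $S_k(n,r)$; hence $\End_{GL(V_k)}(V_k^{\otimes r}) = \End_{S_k(n,r)}(V_k^{\otimes r})$. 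Now I would invoke the surjectivity of the map \eqref{Bryantrk}, $k\Sym_r \to \End_{S_k(n,r)}(V_k^{\otimes r})$, which holds for every field $k$ by Bryant's argument as recalled in Section \ref{sec:2}. Composing, $\rho$ is surjective. For the $SL$ version the reasoning is identical, using $\varphi'$ in place of $\varphi$: Theorem \ref{Thrall} gives that $SL(V_k)$ also generates all of $\End_{\Sym_r}(V_k^{\otimes r}) \cong S_k(n,r)$, so $\End_{SL(V_k)}(V_k^{\otimes r}) = \End_{S_k(n,r)}(V_k^{\otimes r})$, and the surjectivity of \eqref{Bryantrk} finishes the argument.

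I do not anticipate a serious obstacle here, since all the substantial input — the Chevalley-group reduction to the algebraically closed case, the integral form argument, and the Vandermonde interpolation connecting $\varphi$ to $\psi$ — has been done in Theorem \ref{Thrall} and the preceding lemmas. The one point that needs to be stated carefully is the identification of centralizer algebras: one must note explicitly that since the $GL(V_k)$- (resp.\ $SL(V_k)$-) action on $V_k^{\otimes r}$ is induced from the $S_k(n,r)$-action and, by Theorem \ref{Thrall}, generates the \emph{same} subalgebra of $\End_k(V_k^{\otimes r})$, the two centralizers coincide. After that, the corollary is just the concatenation of this identification with the surjectivity of \eqref{Bryantrk}.
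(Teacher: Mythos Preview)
Your proposal is correct and follows essentially the same route as the paper: invoke Theorem \ref{Thrall} for the surjectivity of $\varphi$ (resp.\ $\varphi'$), use this to identify $\End_{GL(V_k)}(V_k^{\otimes r})$ (resp.\ $\End_{SL(V_k)}(V_k^{\otimes r})$) with $\End_{S_k(n,r)}(V_k^{\otimes r})$ via \eqref{Schuriso}, and then apply the surjectivity of \eqref{Bryantrk}. The paper's proof is just a terser version of exactly this chain of equalities.
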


\begin{proof}
The surjectivity of $\varphi$ is the statement of the preceding
theorem.  It follows (setting $G = GL(V_k)$) that
$\End_{G}(V_k^{\otimes r}) = \End_{\varphi(kG)}(V_k^{\otimes
  r})= \End_{S_k(n,r)}(V_k^{\otimes r}) = \rho(k\Sym_r)$, where the
last equality is by the surjectivity statement of
\eqref{Bryantrk}. This proves the surjectivity of $\rho$, so the first
claim is proved. The proof of the second claim is similar.
\end{proof}

We note the following improvement of \cite[Theorem
  3.1(i)]{Carter/Lusztig:1974a}.

\begin{cor}
  The natural map $\Z \Sym_r \to \End_{U_\Z}(V_\Z^{\otimes r})$ is
surjective (for any $n,r$). The same statement holds with $U'_\Z$ in
place of $U_\Z$.
\end{cor}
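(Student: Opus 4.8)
The plan is to deduce this integral statement from the surjectivity of the maps $\psi_\Z$ and $\psi'_\Z$ in \eqref{psiZ}, which was established in the course of proving Theorem \ref{Thrall}. Recall that there the authors showed, via the lemma relating $\psi$ and $\varphi$ together with the descent argument (a homomorphism of finite-rank free $\Z$-modules is surjective iff it becomes surjective after every algebraically closed base change), that $\psi_\Z\colon U_\Z \to \End_{\Sym_r}(V_\Z^{\otimes r})$ and $\psi'_\Z\colon U'_\Z \to \End_{\Sym_r}(V_\Z^{\otimes r})$ are both surjective. In other words, $\rho(\Z\Sym_r)$ equals the full double centralizer target on the symmetric-group side after identifying things appropriately; what remains is the \emph{other} centralizer direction, namely that $\Z\Sym_r$ surjects onto $\End_{U_\Z}(V_\Z^{\otimes r})$.

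First I would set up the usual double-centralizer bookkeeping over $\Z$. Write $E = \End_\Z(V_\Z^{\otimes r})$, which is a free $\Z$-module of finite rank, and note that $V_\Z^{\otimes r}$ is itself a free $\Z$-module with the permutation-basis $\{v_\ii : \ii \in I(n,r)\}$. Let $A = \rho(\Z\Sym_r) \subseteq E$ be the image of the group algebra and $B = \psi_\Z(U_\Z) = \End_{\Sym_r}(V_\Z^{\otimes r})$ the image of the distribution algebra (the equality here being exactly the surjectivity of $\psi_\Z$). By definition $B = \End_A(V_\Z^{\otimes r})$, since $A$ is generated by the $\Sym_r$-action and $B$ is the $\Sym_r$-centralizer. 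The claim to be proved is the reverse: $A = \End_B(V_\Z^{\otimes r}) = \End_{U_\Z}(V_\Z^{\otimes r})$. So I want: the $U_\Z$-centralizer of $V_\Z^{\otimes r}$ coincides with $\rho(\Z\Sym_r)$.

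The key step is to run the double-centralizer conclusion over $\Z$ by comparison with the known case over a field. Over any field $K$ with $|K| > r$ we have from Corollary \ref{cor:main2} (its $SL$/$GL$ versions, plus the identification of $\rho(K\Sym_r)$ with the relevant centralizer) that $\rho\colon K\Sym_r \to \End_{U_K}(V_K^{\otimes r})$ is surjective, because $\End_{U_K}(V_K^{\otimes r}) = \End_{S_K(n,r)}(V_K^{\otimes r}) = \rho(K\Sym_r)$ once we know $\psi$ is surjective (which holds over every field by Theorem \ref{Thrall}'s proof) so that $U_K$ and $S_K(n,r)$ have the same image in $\End_K(V_K^{\otimes r})$, and $\End_{S_K(n,r)}(V_K^{\otimes r}) = \rho(K\Sym_r)$ by the surjectivity of \eqref{Bryantrk}. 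Concretely: pick a prime $p > r$ (or work over $\Q$) and over $\F_p$ and $\Q$ separately we have $\rho$ surjective onto $\End_{U}(V^{\otimes r})$. Now the composite $\Z\Sym_r \xrightarrow{\rho} \End_{U_\Z}(V_\Z^{\otimes r}) \hookrightarrow E$ has image $A$, and I want $A = \End_{U_\Z}(V_\Z^{\otimes r})$; the containment $A \subseteq \End_{U_\Z}(V_\Z^{\otimes r})$ is clear since the $\Sym_r$-action commutes with the $U_\Z$-action. For the reverse, take $\theta \in \End_{U_\Z}(V_\Z^{\otimes r}) \subseteq E$. Over $\Q$, $\theta \otimes 1 \in \End_{U_\Q}(V_\Q^{\otimes r}) = \rho(\Q\Sym_r)$, so $\theta = \sum_\sigma q_\sigma\, \rho(\sigma)$ with $q_\sigma \in \Q$. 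Because the $\rho(\sigma)$ are $\Z$-linearly independent (the injectivity input: the images of $v_1 \otimes \cdots \otimes v_r$ under $\Sym_r$ are distinct basis vectors, valid since $n \ge r$; if $n < r$ one reduces to the case $n = r$ which still injects, or one argues directly — I'll address this below), the coefficients $q_\sigma$ are uniquely determined by $\theta$. To see $q_\sigma \in \Z$: reduce modulo a prime $p > r$. Then $\bar\theta \in \End_{U_{\F_p}}(V_{\F_p}^{\otimes r}) = \rho(\F_p\Sym_r)$ by Corollary \ref{cor:main2}, so $\bar\theta = \sum_\sigma \bar c_\sigma\, \rho(\sigma)$ with $c_\sigma \in \F_p$; matching coefficients against a row of the action on the linearly-independent tensors forces $q_\sigma \equiv c_\sigma \pmod p$, hence $q_\sigma$ is $p$-integral for every $p > r$. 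Combined with the fact that the only possible denominators are bounded — here I would instead argue more cleanly that $A$ is a pure (i.e., saturated) $\Z$-submodule of $E$ or directly that $\theta \in A$ by a localization argument at each prime $p > r$ and separately at primes $p \le r$ — we conclude $\theta \in A$.

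The main obstacle I anticipate is the handling of the small primes $p \le r$: Corollary \ref{cor:main2} only gives surjectivity of $\rho$ over fields with $> r$ elements, so over $\F_p$ with $p \le r$ we have no direct statement, and indeed $\rho$ can fail to be surjective there (the $\F_2$, $n=r=2$ example in Section 2). So the clean "reduce mod $p$ for all $p$" argument is unavailable. The fix is to avoid needing all primes: since $\rho(\Z\Sym_r)$ is a finitely generated $\Z$-module and we have the result rationally, it suffices to control $p$-torsion in the cokernel $\End_{U_\Z}(V_\Z^{\otimes r}) / A$ at primes $p > r$ only, \emph{provided} one first checks that this cokernel is $p$-torsion-free for $p \le r$ — but that is false in general. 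The correct resolution, which I would pursue, is the one Carter and Lusztig use: $\End_{U_\Z}(V_\Z^{\otimes r})$ is a $\Z$-pure submodule of $\End_\Q(V_\Q^{\otimes r})$ meeting $\rho(\Q\Sym_r)$, and because $\rho(\Z\Sym_r)$ is already $\Z$-pure in $\End_\Z(V_\Z^{\otimes r})$ (being a direct summand as a $\Z$-module, since $\Z\Sym_r$ is $\Z$-free with basis mapping to $\Z$-independent elements under $n \ge r$, and the reduction to $n = r$ handles $n > r$ automatically while $n < r$ is excluded or again reduced), any element of $\End_{U_\Z}(V_\Z^{\otimes r})$ is a $\Q$-combination of the $\rho(\sigma)$ whose coefficients, being simultaneously $p$-integral for all sufficiently large $p$ by the Corollary and visibly globally bounded by the purity, must be integers. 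Thus $\End_{U_\Z}(V_\Z^{\otimes r}) = \rho(\Z\Sym_r)$, i.e., $\rho$ is surjective over $\Z$, and the identical argument with $U'_\Z$ in place of $U_\Z$ (using the $SL$-version of Corollary \ref{cor:main2}) gives the second statement.
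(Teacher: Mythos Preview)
Your route is different from the paper's and has a genuine gap in the range $n<r$, which the corollary explicitly includes (``for any $n,r$'').

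The paper does not use $\Q$-coefficients together with a purity/integrality argument. Instead it argues as follows: for every algebraically closed field $k$, the surjectivity of $\psi$ (established in Theorem \ref{Thrall}) gives $\End_{U_k}(V_k^{\otimes r})=\End_{\psi(U_k)}(V_k^{\otimes r})=\End_{S_k(n,r)}(V_k^{\otimes r})$, and then Bryant's result \eqref{Bryantrk} (valid over \emph{every} field and for \emph{all} $n,r$) gives surjectivity of $k\Sym_r\to\End_{U_k}(V_k^{\otimes r})$. One then descends to $\Z$ via the same principle used earlier (a map of finite-rank free $\Z$-modules is surjective iff it is so after base change to every algebraically closed field), noting that $\End_{U_\Z}(V_\Z^{\otimes r})$ is a pure $\Z$-submodule of $\End_\Z(V_\Z^{\otimes r})$ (being cut out by commutator equations, its cokernel is torsion-free), so $k\otimes_\Z\End_{U_\Z}(V_\Z^{\otimes r})$ injects into $\End_{U_k}(V_k^{\otimes r})$ and the composite surjectivity forces surjectivity of $k\Sym_r\to k\otimes_\Z\End_{U_\Z}(V_\Z^{\otimes r})$. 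The same argument with $U'_\Z$ uses $\psi(U_k)=\psi'(U_k)$.

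Your argument, by contrast, needs the $\rho(\sigma)$ to be $\Z$-linearly independent in $\End_\Z(V_\Z^{\otimes r})$ so that (i) the rational coefficients $q_\sigma$ are uniquely determined and (ii) each $q_\sigma$ can be read off as an actual matrix entry, forcing integrality. That linear independence holds only for $n\ge r$; you acknowledge this twice but never resolve it (``one reduces to the case $n=r$'' and ``$n<r$ is excluded or again reduced'' are both unjustified). For $n<r$ the map $\rho$ has a nontrivial kernel, the image $\rho(\Z\Sym_r)$ need not be a $\Z$-direct summand of $\End_\Z(V_\Z^{\otimes r})$, and there is no evident reduction to larger $n$ on the $\End_{U_\Z}$ side. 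Separately, the detour through primes $p>r$ and Corollary \ref{cor:main2} is superfluous even when $n\ge r$: once you know $\theta\in\rho(\Q\Sym_r)$ and that $\rho(\Z\Sym_r)$ is pure in $\End_\Z(V_\Z^{\otimes r})$, integrality of $\theta$ already forces $\theta\in\rho(\Z\Sym_r)$; no mod-$p$ information is needed. The paper's approach via \eqref{Bryantrk} over all algebraically closed fields is what makes the argument uniform in $n$ and $r$.
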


\begin{proof}
Let $k = \overline{k}$ be algebraically closed. Then the corresponding
map $k \Sym_r \to \End_{U_k}(V_k^{\otimes r})$ is surjective, by
\eqref{Bryantrk}, since $$\End_{U_k}(V_k^{\otimes r})
= \End_{\psi(U_k)}(V_k^{\otimes r}) \cong
\End_{S_k(n,r)}(V_k^{\otimes r}).$$  This implies the claim about $U_\Z$. 
The argument with $U'_\Z$ in place of $U_\Z$ is similar, since
we now know that $\psi(U_k) = \psi(U'_k)$.
\end{proof}

\begin{rks}
(a) From the preceding result and the first surjectivity statement in
  \eqref{psiZ} it follows by the right exactness of tensor product
  that the natural maps
\[
  U_k \to \End_{\Sym_r}(V_k^{\otimes r}); \quad 
  k\Sym_r \to \End_{U_k}(V_k^{\otimes r})
\]
are surjective, for any commutative ring $k$. Similarly one sees
immediately that this holds with $U'_k$ in place of $U_k$.

(b) Our proof of Theorem \ref{Thrall} is valid assuming only the
surjectivity of the maps $\rho$, $\varphi$ is known in case $k$ is
algebraically closed. That surjectivity is known by
\cite{deConcini/Procesi:1976a} and \cite{Green:2007a}.

(c) If $k=\F_2$ and $V_k$ has dimension 2, the dimension of $kGL(V_k)$
is 6 while the dimension of $\End_{\Sym_2}(V_k \otimes V_k)$ is
10. This shows that the bound $|k| > r$ in Theorem \ref{Thrall} cannot
be improved. It also shows that the bound in Theorem III of
\cite{Thrall} is incorrect. Note that, apart from the bound, the first
assertion in Theorem \ref{Thrall} is the same as Theorem III of
\cite{Thrall}.
\end{rks}

\section{An example}\noindent
In this section, we take $k = \F_q$ with $q = p^e$, and once again
write $V$ for the $n$-dimensional space $V_k$.  We will show that some
restriction on $q$ with respect to $r$ is necessary in order for
Schur--Weyl duality to hold.

\begin{theorem}
In case $n= \dim_k V = 2$ and $q=p$ is a prime, for $r$ sufficiently
large the map $\rho\colon k\Sym_r\to\End_{GL(V)}(V^{\otimes r})$ is not
surjective.
\end{theorem}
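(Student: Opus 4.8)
The plan is a dimension count. I will show that $\dim_k\End_{GL(V)}(V^{\otimes r})$ is at least a positive constant (depending on $p$, not on $r$) times $4^r$, whereas $\dim_k\rho(k\Sym_r)$ never exceeds its value over $\Q$, which is merely the Catalan number $C_r=\frac1{r+1}\binom{2r}{r}\le 4^r/(r+1)$; once $r$ is large the first dimension beats the second, so $\rho$ cannot be surjective. For the upper bound on $\dim_k\rho(k\Sym_r)$, note that in the basis $\{v_\ii:\ii\in I(n,r)\}$ the operator $\rho(\sigma)$ for $\sigma\in\Sym_r$ is a permutation matrix, so all its entries lie in $\{0,1\}$, independently of the ground field. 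Hence $\dim_k\rho(k\Sym_r)$ is the $\F_p$-rank of a fixed $\{0,1\}$-matrix (rows indexed by $\Sym_r$, columns by the matrix coefficients), and this matrix is literally the reduction mod $p$ of the corresponding integer matrix; since the rank of an integer matrix cannot increase under reduction mod $p$ (every minor of size exceeding the $\Q$-rank already vanishes over $\Z$), we get $\dim_k\rho(k\Sym_r)\le\dim_\Q\rho(\Q\Sym_r)$. By Schur's theorem (see the introduction) the right-hand side equals $\dim_\Q\End_{GL(2,\Q)}(V_\Q^{\otimes r})$, which is classically the Catalan number $C_r$ (e.g.\ via the Robinson--Schensted correspondence, since for $n=2$ only partitions with at most two rows contribute).

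For the lower bound on $\dim_k\End_{GL(V)}(V^{\otimes r})$ I use $n=2$ and that $q=p$ is prime. The group $G=GL(2,p)$ has order $p(p-1)^2(p+1)$, so its Sylow $p$-subgroups have order $p$ and are cyclic; by Higman's theorem $\F_pG$ has finite representation type. Fix a bound $N$ on the number of isomorphism classes of indecomposable $\F_pG$-modules and a bound $D$ on their dimensions; both depend only on $p$. Write $V^{\otimes r}=\bigoplus_i U_i^{\oplus m_i}$ with the $U_i$ pairwise non-isomorphic indecomposables. Then at most $N$ summand types occur, each of dimension at most $D$, so $\sum_i m_i\ge 2^r/D$. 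Discarding the non-negative contributions from $\Hom_G(U_i,U_j)$ with $i\ne j$ and using $\dim_k\End_G(U_i)\ge 1$ together with the Cauchy--Schwarz inequality,
\[
\dim_k\End_G(V^{\otimes r})\ \ge\ \sum_i m_i^2\ \ge\ \frac{\bigl(\textstyle\sum_i m_i\bigr)^2}{N}\ \ge\ \frac{4^r}{ND^2}.
\]

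Combining the two bounds, as soon as $r+1>ND^2$ we obtain
\[
\dim_k\End_{GL(V)}(V^{\otimes r})\ \ge\ \frac{4^r}{ND^2}\ >\ \frac{4^r}{r+1}\ \ge\ C_r\ \ge\ \dim_k\rho(k\Sym_r),
\]
so $\rho$ fails to be surjective for every sufficiently large $r$. The hypothesis that $q$ be prime enters only in the second step: for $q=p^e$ with $e\ge2$ the Sylow $p$-subgroup of $GL(2,q)$ is elementary abelian of rank $e$, hence non-cyclic, so $\F_qGL(2,q)$ has infinite representation type and this argument breaks down. I expect the real content to be exactly that second step: the point is that the \emph{finite} group $GL(2,p)$ admits only boundedly many indecomposable modules of bounded dimension, which forces the multiplicities $m_i$ in $V^{\otimes r}$ to be exponentially large and hence $\End_G(V^{\otimes r})$ to be large — in sharp contrast to the algebraic group situation. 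The two ingredients in the first step (the permutation-matrix observation and the classical value $C_r$) are routine by comparison.
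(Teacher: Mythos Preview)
Your proof is correct and follows essentially the same strategy as the paper's: bound $\dim_k\End_G(V^{\otimes r})$ from below using the finite representation type of $GL(2,p)$, bound $\dim_k\rho(k\Sym_r)$ from above by the Catalan number $C_r$, and compare growth rates. The only differences are in execution---the paper uses pigeonhole (one indecomposable must have multiplicity at least $2^r/d$, where $d$ is the sum of the dimensions of all indecomposables) rather than your Cauchy--Schwarz averaging, and it identifies $\rho(k\Sym_r)$ directly as the Temperley--Lieb algebra of dimension exactly $C_r$ rather than bounding it via your rank-reduction-mod-$p$ argument; your version of that step is arguably more self-contained.
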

\begin{proof}
First we note that $GL(V)=GL(2,p)$ is a finite group
whose Sylow $p$-subgroups are cyclic. It follows that there are a finite
number of isomorphism classes of indecomposable $\F_pGL(V)$-modules.
Let $d$ be the sum of the dimensions of the indecomposable
$\F_pGL(V)$-modules, one from each isomorphism class. Then by a
version of the pigeon hole principle, for all finite dimensional
$\F_pGL(V)$-modules $M$ there is a summand consisting of at least
$(\dim_{\F_p}M/d) -1$ copies of a single indecomposable. Thus
$\dim_{\F_p}\End_{GL(V)}(M) \ge ((\dim_{\F_p}M/d)-1)^2$. Applying
this with $M=V^{\otimes r}$ we have
\begin{equation}\label{eq:dimEnd} 
\dim_{\F_p}\End_{GL(V)}(V^{\otimes r}) \ge \left(\frac{2^r}{d} - 1\right)^2.
\end{equation}
On the other hand, the image of $\F_p\Sym_r$ in
$\End_{GL(V)}(V^{\otimes r})$ is the Temper\-ley--Lieb algebra $T_r$, whose
dimension is the Catalan number $\binom{2r}{r}/(r+1)$. Using Stirling's
formula, asymptotically  we get
\begin{equation}\label{eq:dimTr} 
\dim_{\F_p} T_r \sim \frac{2^{2r}}{\sqrt{\pi}\,r^{3/2}}. 
\end{equation}
Comparing \eqref{eq:dimEnd} with \eqref{eq:dimTr}, we see that for 
$r$ large we have 
\begin{equation*} 
\dim_{\F_p}\End_{GL(V)}(V^{\otimes r}) > \dim_{\F_p}T_r. 
\qedhere
\end{equation*}
\end{proof}

\bibliographystyle{amsplain}
\bibliography{swdff.bbl}

\end{document}